\numberwithin{equation}{section}
\numberwithin{figure}{section}
 \theoremstyle{definition}
 \newtheorem*{defn*}{\protect\definitionname}
\theoremstyle{plain}
\newtheorem{thm}{\protect\theoremname}[section]
  \theoremstyle{remark}
  \newtheorem{rem}[thm]{\protect\remarkname}
  \theoremstyle{plain}
  \newtheorem{lem}[thm]{\protect\lemmaname}
\newcommand{\hide}[1]{}
\newcommand*{\dive}{\operatorname{div}}
\newcommand*{\curl}{\operatorname{curl}}
\newcommand*{\Grad}{\operatorname{Grad}}
\newcommand*{\Div}{\operatorname{Div}}
\newcommand*{\grad}{\operatorname{grad}}
\DeclareMathAccent{\Circ}{\mathalpha}{operators}{"17}
\newcommand{\interior}[1]{\Circ{#1}}
\renewcommand{\Re}{\operatorname{\mathfrak{Re}}}
\newcommand{\oi}[2]{\left]#1,#2 \right[}
\renewcommand{\tilde}{\widetilde}
\renewcommand*{\epsilon}{\varepsilon}
\renewcommand*{\rho}{\varrho}
  \providecommand{\definitionname}{Definition}
  \providecommand{\lemmaname}{Lemma}
  \providecommand{\remarkname}{Remark}
\providecommand{\theoremname}{Theorem}
\begin{document}
\title{On Wellposedness for Some Thermo-Piezo-Electric Coupling Models}

\author{T. Mulholland, R. Picard, S. Trostorff \& M. Waurick}

\maketitle
\textbf{Abstract.} {There is an increasing reliance on mathematical modelling to assist in the design of piezoelectric ultrasonic
transducers since this provides a cost-effective and quick way to arrive at a first prototype. Given a desired
operating envelope for the sensor the inverse problem of obtaining the associated design parameters within
the model can be considered. It is therefore of practical interest to examine the well-posedness of such
models. There is a need to extend the use of such sensors into high temperature environments and so this
paper shows, for a broad class of models, the well-posedness of the magneto-electro-thermo-elastic problem.
Due to its widespread use in the literature, we also show the well-posedness of the quasi-electrostatic case}

\textbf{MSC class:} 35; 35Q60; 46N20;74F05

\textbf{Keywords: }{piezoelectric; well-posedness; Maxwell equations; thermal}

\maketitle

\section{Introduction.}

Piezoelectric structures can receive electrical energy and use this
to alter their mechanical and thermal properties (and vice versa)
and hence they are ideally placed for use in smart materials that
can be automatically adjusted to assist in the vibration and thermal
stress control of structures \cite{rafiee2013}, as actuators \cite{tucsnak1996},
or as energy harvesters \cite{wynn2013}. To assist in the design
of such technology there is an increasing reliance on mathematical
models as a cost-effective and fast way to arrive at a first prototype
\cite{yuan2011}. The primary use of these piezoelectric materials
is in ultrasonic transducers. In transmission mode an electrical signal
is sent in to the device and the resulting mechanical vibration causes
an ultrasound wave to be transmitted through the material of interest.
Having traversed this material this mechanical wave is then received
by the transducer and converted back to an electrical signal for processing.
Typical applications of this technology can be found in medical imaging
and non-destructive testing of safety critical structures \cite{tant2015a,tant2015b,tramontana2015}.
The well-posedness of these models is therefore of practical interest
if these models are to be used in considering inverse problems centred
on optimising the material parameters and other design parameters
to meet some pre-specified operational quality of the device \cite{algehyne2015,mulholland2007,mulholland2008,orr2008a,orr2008b,orr2007,walker2010,walker2011,walker2015}.
The well-posedness of the forward problem in piezoelectric material
modelling is also essential if one wants to consider the inverse problem
of obtaining the piezoelectric material tensors from experimental
measurements \cite{kaltenbacher2006,lahmer2008}. A modelling assumption
that is often employed in order to reduce the size of the model is
the so-called quasi-electrostatic approach \cite{daros2000,fang2013,imperiale2012,leugering2015}.
Since the electrical waves travel at a far faster speed than the mechanical
waves within a piezoelectric material then this separation in time
scales is used to focus attention on the timescale of the mechanical
waves by assuming that the electrical activity is instantaneous (so
the electrical potential is spatially uniform). As well as simplifying
the analysis this approach can also help to reduce the computational
time in numerical simulations of electromechanical waves propagating
in a piezoelectric material \cite{jiang2000}. Given the widespread
use of this approximation the question of the well-posedness of the
quasi-electrostatic model of a piezoelectric material has been studied
\cite{kapitonov2003,kapitonov2006,kapitonov2007}. It is also possible
to incorporate dissipative loss terms in the formulation and consider
the existence and regularity of the model solution \cite{miara2009}
and thermal effects \cite{mindlin1974}. A useful summary of the literature
that has examined this well-posedness for a range of boundary conditions
is provided by Akamatsu and Nakamura \cite{akamatsu2002}. They prove
that in a bounded domain with a Lipschitz boundary the initial-boundary
value problem is well-posed. The initial conditions pertain to the
mechanical displacement and its time derivative, and boundary conditions
are stated in terms of the mechanical displacement, the mechanical
stress, the electric potential and the electric displacement; this
is a typical scenario for a piezoelectric transducer operating in
reception mode. When a piezoelectric device is being tested it is
normally immersed in a water bath and its transmission performance
assessed by using a receiving hydrophone at some distance from the
piezoelectric device. The existence and uniqueness of solutions to
this fluid-solid interaction problem have very recently been settled
using a potential method and the theory of pseudodifferential equations
\cite{chkadua2015}. The situation where the piezoelectric material
is in contact with a surface and undergoing anti-plane motion has
also been investigated \cite{ionica2013,migorski2009}. In this situation
the uniqueness and existence of a weak solution is proved. The continuous
dependence of the solution on the data has also been shown \cite{barboteu2009}
and this has been used to develop a numerical solution of the problem.
This piezoelectric problem was recently extended to consider the existence
and uniqueness when coupled with the thermal effects \cite{benaissa2015}.
Such models are needed to assist in the design of smart ceramic materials
that can adjust their mechanical properties as the temperature
of their environment fluctuates; in the non-destructive testing application,
for example, there is an increasing demand for ultrasonic transducers
that can withstand high temperatures. The above work is based on the
quasi-electrostatic approximation and the well-posedness of the non-stationary
piezoelectric system, wherein the Maxwell equations are involved,
has also been considered \cite{ammari2010}; one simplification that
is often used however is to study the time harmonic case \cite{mercier2005}.
The emergence of smart materials has very recently led researchers
to extend the model to include magnetic and thermal effects and the
well-posedness of the magneto-electro-thermo-elastic problem has been
very recently considered \cite{bonaldi2015,perla2014,sixto2013}.
By non-dimensionalising the problem and identifying a small parameter
this dynamic model was systematically reduced to the quasi-electrostatic
case. Note that they assumed that all the coefficients were bounded and satisfied some conditions
warranting positive definiteness.  This then permitted the use of semi-group theory to
settle the well-posedness issue. The resulting electrical and magnetic
potentials enabled consideration of the energy transfer in the material
and this therefore has applications in the design of energy harvesting materials.
In this paper we will consider a similar model but our methodology
will apply to a broader class of problems (for example the operators
here could be non-local of convolution type) and will not be restricted
to the multiplication operators as in this previous work. The coefficients
could be fully anisotropic, inhomogeneous and even could -- for example
-- be non-local. In some cases material behaviour may be better described
by an averaging influence of the neighborhood. To be concrete, operator
coefficients could show a behaviour like
\[
u\mapsto Ku
\]
with
\[
\left(Ku\right)\left(x\right)\coloneqq\int_{w}k\left(x-y\right)\:u\left(x\right)\:dx,
\]
where $k$ is a kernel function, e.g. generated by the Gaussian distribution,
and $x+\left[w\right]$, $w\subseteq\mathbb{R}^{3}$, a window of
influence impacting on the behaviour at point $x$. The considerations
are limited in so far as only linear material behaviour is considered.
Thus, for example, the non-linear effects of the temperature, which are
known to limit the piezo-electric effect, see the plate case
e.g. \cite{Ciarlet97I}, \cite{Rahmoune01121998} and the references
cited therein, are clearly beyond the scope of this ground-laying study
of the general linear case.

\section{The System Equations of Thermo-Piezo-Electro-Magnetism.}

Let $\Omega\subseteq\mathbb{R}^{3}$. The system of Thermo-Piezo-Electro-Magnetism
is a coupled system consisting of the equation of elasticity, Maxwell's
equations and the equation of heat conduction. Throughout, we denote
by $\partial_{0}$ the derivative with respect to time. The equation
of elasticity is given by
\begin{align}
\partial_{0}^{2}\rho_{\ast}u-\Div T & =F_{0},\label{eq:elasticity}
\end{align}
where $u:\mathbb{R}\times\Omega\to\mathbb{R}^{3}$ describes the displacement
of the elastic body $\Omega,$ $T:\mathbb{R}\times\Omega\to\mathrm{sym}\left[\mathbb{R}^{3\times3}\right]$
denotes the stress tensor, which is assumed to attain values in the space
of symmetric matrices. The function $\rho_{\ast}:\Omega\to\mathbb{R}$
stands for the density of $\Omega$ and $F_{0}:\mathbb{R}\times\Omega\to\mathbb{R}^{3}$
is an external force term. Maxwell's equation are given by
\begin{align}
\partial_{0}B+\curl E & =F_3,\nonumber \\
\partial_{0}D-\curl H & =F_2-\sigma E.\label{eq:Maxwell}
\end{align}
Here, $B,D,E,H:\mathbb{R}\times\Omega\to\mathbb{R}^{3}$ denote the
magnetic flux density, the electric displacement, the electric field
and the magnetic field, respectively. The functions $F_2,F_3:\mathbb{R}\times\Omega\to\mathbb{R}^{3}$
are given source terms and $\sigma:\Omega\to\mathbb{R}$ denotes the
resistance. Finally, the equation of heat conduction is given by
\begin{equation}
\partial_{0}\Theta_{0}\eta+\dive q=F_{4},\label{eq:heat}
\end{equation}
where $\eta:\mathbb{R}\times\Omega\to\mathbb{R}$ denotes the entropy
density, $q:\mathbb{R}\times\Omega\to\mathbb{R}^{3}$ is the heat
flux, $F_{4}:\mathbb{R}\times\Omega\to\mathbb{R}$ is an external
heat source and $\Theta_{0}:\Omega\to\mathbb{R}$ is the reference temperature.
Of course, all these equations need to be completed by suitable material
laws, where also the coupling will occur. As it will turn out, the
system can be written in the following abstract form
\begin{equation}
\left(\partial_{0}M_{0}+M_{1}+\left(\begin{array}{cccccc}
0 & -\Div & 0 & 0 & 0 & 0\\
-\Grad & 0 & 0 & 0 & 0 & 0\\
0 & 0 & 0 & -\curl & 0 & 0\\
0 & 0 & \curl & 0 & 0 & 0\\
0 & 0 & 0 & 0 & 0 & \dive\\
0 & 0 & 0 & 0 & \grad & 0
\end{array}\right)\right)\left(\begin{array}{c}
v\\
T\\
E\\
H\\
\Theta_{0}^{-1}\theta\\
q
\end{array}\right)=F,\label{eq:system}
\end{equation}
for suitable bounded operators $M_{0},M_{1}$ on the Hilbert space
$H\coloneqq L^{2}(\Omega)^{3}\oplus\mathrm{sym}\left[L^{2}(\Omega)^{3\times3}\right]\oplus L^{2}(\Omega)^{3}\oplus L^{2}(\Omega)^{3}\oplus L^{2}(\Omega)\oplus L^{2}(\Omega)^{3}$
. Here, $v\coloneqq\partial_{0}u$ and $\theta:\mathbb{R}\times\Omega\to\mathbb{R}$
denotes the temperature. Of course, we also need to impose boundary
conditions. To make this precise, we need to define the spatial differential
operators.
\begin{defn*}
We denote by $\interior C_{\infty}(\Omega)$ the space of arbitrarily
differentiable functions with compact support in $\Omega.$ Then we
define the operator $\interior\grad$ as the closure of
\begin{align*}
\interior C_{\infty}(\Omega)\subseteq L^{2}(\Omega) & \to L^{2}(\Omega)^{3}\\
\phi & \mapsto\left(\partial_{1}\phi,\partial_{2}\phi,\partial_{3}\phi\right)
\end{align*}
as well as $\interior\dive$ as the closure of
\begin{align*}
\interior C_{\infty}(\Omega)^{3}\subseteq L^{2}(\Omega)^{3} & \to L^{2}(\Omega)\\
(\phi_{1},\phi_{2},\phi_{3}) & \mapsto\sum_{i=1}^{3}\partial_{i}\phi_{i}.
\end{align*}
Integration by parts shows $\interior\dive\subseteq-(\interior\grad)^{\ast}$
and we set $\dive\coloneqq-\left(\interior\grad\right)^{\ast}$ and
$\grad\coloneqq-\left(\interior\dive\right)^{\ast}.$ Similarly, we
define the operator $\interior\curl$ as the closure of
\begin{align*}
\interior C_{\infty}(\Omega)^{3}\subseteq L^{2}(\Omega)^{3} & \to L^{2}(\Omega)^{3}\\
(\phi_{1},\phi_{2},\phi_{3}) & \mapsto\left(\begin{array}{ccc}
0 & -\partial_{3} & \partial_{2}\\
\partial_{3} & 0 & -\partial_{1}\\
-\partial_{2} & \partial_{1} & 0
\end{array}\right)\left(\begin{array}{c}
\phi_{1}\\
\phi_{2}\\
\phi_{3}
\end{array}\right)
\end{align*}
and $\curl\coloneqq\left(\interior\curl\right)^{\ast}\supseteq\interior\curl.$
Finally, we define $\interior\Grad$ and $\interior\Div$ as the closure
of
\begin{align*}
\interior C_{\infty}(\Omega)^{3}\subseteq L^{2}(\Omega)^{3} & \to\mathrm{sym}\left[L^{2}(\Omega)^{3\times3}\right]\\
(\phi_{1},\phi_{2},\phi_{3}) & \mapsto\frac{1}{2}\left(\partial_{j}\phi_{i}+\partial_{i}\phi_{j}\right)_{i,j\in\{1,2,3\}}
\end{align*}
and of
\begin{align*}
\mathrm{sym}\left[\interior C_{\infty}(\Omega)^{3\times3}\right]\subseteq\mathrm{sym}\left[L^{2}(\Omega)^{3\times3}\right] & \to L^{2}(\Omega)^{3}\\
(\phi_{ij})_{i,j\in\{1,2,3\}} & \mapsto\left(\sum_{j=1}^{3}\partial_{j}\phi_{ij}\right)_{i\in\{1,2,3\}},
\end{align*}
respectively and set $\Grad\coloneqq-\left(\interior\Div\right)^{\ast}$
as well as $\Div\coloneqq-\left(\interior\Grad\right)^{\ast}.$ Elements
in the domain of the operators marked by a circle satisfy an abstract
homogeneous boundary condition, which, in the case of a smooth boundary
$\partial\Omega$, can be written as
\[
u=0\mbox{ on }\partial\Omega
\]
for $u\in D(\interior\grad)$ or $u\in D(\interior\Grad)$,
\[
u\cdot n=0\mbox{ on }\partial\Omega
\]
for $u\in D(\interior\dive)$ or $u\in D(\interior\Div),$ where $n$
denotes the exterior unit normal vector field on $\partial\Omega$
and
\[
u\times n=0\mbox{ on }\partial\Omega,
\]
for $u\in D(\interior\curl).$
\end{defn*}
We will assume that $v=0,E\times n=0$ and $q\cdot n=0$ on the boundary
and hence, the block operator in (\ref{eq:system}) will be replaced
by
\[
\left(\begin{array}{cccccc}
0 & -\Div & 0 & 0 & 0 & 0\\
-\interior\Grad & 0 & 0 & 0 & 0 & 0\\
0 & 0 & 0 & -\curl & 0 & 0\\
0 & 0 & \interior\curl & 0 & 0 & 0\\
0 & 0 & 0 & 0 & 0 & \interior\dive\\
0 & 0 & 0 & 0 & \grad & 0
\end{array}\right),
\]
which is now a skew-selfadjoint operator on $H$.

To recall the solution theory (as described in the last chapter of
\cite{PDE_DeGruyter}) for our simple situation the needed requirement
is that $M_{0}$ is selfadjoint and that
\begin{equation}
\nu M_{0}+\Re M_{1}\geq c_{0}>0\mbox{ for all sufficiently large }\nu\in]0,\infty[.\:\label{eq:posdef}
\end{equation}
Equation (\ref{eq:posdef}) is for example satisfied if $M_{0}$
is strictly positive definite on its range and $\Re M_{1}$ strictly
positive definite on the null space of $M_{0}$.
\begin{rem}
Whenever we are not interested in the actual constant $c_{0}\in\oi0\infty$
we shall write for the strict positive definiteness constraint
\[
\Re T\geq c_{0}
\]
simply
\[
T\gg0.
\]
So, the general requirement for the problem class under consideration
would be written as
\begin{equation}
M_{0}\:\mbox{selfadjoint},\:\nu M_{0}+M_{1}\gg0\label{eq:pos-def11}
\end{equation}
for all sufficiently large $\nu\in]0,\infty[$~.
\end{rem}

\section{The System Equations of Thermo-Piezo-Electricity.}

In this section we discuss material relations suggested in \cite{mindlin1974}
and derive the structure of the corresponding operators $M_{0}$ and
$M_{1}$. Furthermore, we give sufficient conditions on the parameters
involved to warrant the solvability condition (\ref{eq:pos-def11}).  The material relations described in \cite{mindlin1974} are initially
given in the form (where we write $\mathcal{E}=\Grad u$ as usual
for the strain tensor)
\begin{eqnarray*}
T & = & C\:\mathcal{E}-eE-\lambda\theta,\\
D & = & e^{*}\mathcal{E}+\epsilon E+p\,\theta,\\
B & = & \mu\,H,\\
\eta & = & \lambda^{*}\mathcal{E}+p^{*}E+\alpha\,\Theta_{0}^{-1}\,\theta.
\end{eqnarray*}
Here $C\in L\left(\mathrm{sym}\left[L^{2}(\Omega)^{3\times3}\right]\right)$
is the elasticity tensor, $\varepsilon,\mu\in L\left(L^{2}(\Omega)^{3}\right)$
are the permittivity and permeability, respectively, $\alpha\coloneqq\rho_{\ast}c\in L(L^{2}(\Omega))$
is the product of the mass density\footnote{Throughout, we identify $L^{\infty}-$functions with their induced
multiplication operators.} $\rho_{\ast}\in L^{\infty}(\Omega)$ and the specific heat capacity
$c\in L(L^{2}(\Omega))$ and $\Theta_{0}:\Omega\to\mathbb{R}$ is
the reference temperature which satisfies $\Theta_{0},\Theta_{0}^{-1}\in L^{\infty}(\Omega)$
.The operators $e\in L\left(L^{2}(\Omega)^{3};\mathrm{sym}\left[L^{2}(\Omega)^{3\times3}\right]\right),\lambda\in L\left(L^{2}(\Omega);\mathrm{sym}\left[L^{2}(\Omega)^{3\times3}\right]\right),p\in L(L^{2}(\Omega);L^{2}(\Omega)^{3})$
are coupling parameters. As a first minor adjustment we make the relative
temperature $\Theta_{0}^{-1}\,\theta$ our new unknown temperature
function yielding
\begin{eqnarray*}
T & = & C\:\mathcal{E}-eE-\left(\lambda\Theta_{0}\right)\Theta_{0}^{-1}\theta,\\
D & = & e^{*}\mathcal{E}+\epsilon E+\left(p\Theta_{0}\right)\,\Theta_{0}^{-1}\theta,\\
B & = & \mu\,H,\\
\Theta_{0}\eta & = & \left(\Theta_{0}\lambda^{*}\right)\mathcal{E}+\left(\Theta_{0}p^{*}\right)E+\gamma_{0}\,\Theta_{0}^{-1}\theta,
\end{eqnarray*}
where we have introduced the abbreviation
\begin{align*}
\gamma_{0} & \coloneqq\Theta_{0}\alpha.
\end{align*}
We assume that heat conduction is governed by the Maxwell-Cattaneo-Vernotte
modification
\begin{align*}
\partial_{0}\kappa_{1}q+\kappa_{0}^{-1}q+\grad\theta & =0,
\end{align*}
for operators $\kappa_{0},\kappa_{1}\in L(L^{2}(\Omega)^{3})$. To
adapt the material relations to our framework we solve for $\mathcal{E}$
and obtain
\begin{eqnarray*}
\mathcal{E} & = & C^{-1}T+C^{-1}eE+C^{-1}\left(\lambda\Theta_{0}\right)\Theta_{0}^{-1}\theta,\\
D & = & e^{*}C^{-1}T+\left(\epsilon+e^{*}C^{-1}e\right)E+\left(p\Theta_{0}+e^{*}C^{-1}\lambda\Theta_{0}\right)\Theta_{0}^{-1}\theta,\\
B & = & \mu\:H,\\
\Theta_{0}\eta & = & \Theta_{0}\lambda^{*}C^{-1}T+\left(\Theta_{0}p^{*}+\Theta_{0}\lambda^{*}C^{-1}e\right)E+\left(\gamma_{0}+\Theta_{0}\lambda^{*}C^{-1}\lambda\Theta_{0}\right)\Theta_{0}^{-1}\theta.
\end{eqnarray*}
Thus, we arrive at a equation of the form (\ref{eq:system}) with
\[
M_{0}\coloneqq\left(\begin{array}{cccccc}
\rho_{*} & 0 & \quad0 & 0 & 0 & 0\\
0 & C^{-1} & \quad C^{-1}e & 0 & C^{-1}\lambda\Theta_{0} & 0\\
0 & e^{*}C^{-1} & \quad\left(\epsilon+e^{*}C^{-1}e\right) & 0 & \left(p\Theta_{0}+e^{*}C^{-1}\lambda\Theta_{0}\right) & 0\\
0 & 0 & \quad0 & \mu & 0 & 0\\
0 & \Theta_{0}\lambda^{*}C^{-1} & \quad\left(\Theta_{0}p^{*}+\Theta_{0}\lambda^{*}C^{-1}e\right) & \quad0\quad & \,\left(\gamma_{0}+\Theta_{0}\lambda^{*}C^{-1}\lambda\Theta_{0}\right) & 0\\
0 & 0 & \quad0 & 0 & 0 & \kappa_{1}
\end{array}\right)
\]
and
\[
M_{1}\coloneqq\left(\begin{array}{cccccc}
0 & 0 & 0 & 0 & 0 & 0\\
0 & 0 & 0 & 0 & 0 & 0\\
0 & 0 & \sigma & 0 & 0 & 0\\
0 & 0 & 0 & 0 & 0 & 0\\
0 & 0 & 0 & 0 & 0 & 0\\
0 & 0 & 0 & 0 & 0 & \kappa_{0}^{-1}
\end{array}\right).
\]
We need to verify the solvability condition (\ref{eq:pos-def11})
for these operators $M_{0}$ and $M_{1}.$
\begin{thm}
Assume that $\rho_{\ast},\varepsilon,\mu,C,\gamma_{0}$ are selfadjoint
and non-negative. Furthermore, we assume $\rho_{\ast},\mu,C,\gamma_{0}\gg0$
as well as $\nu\left(\varepsilon-\Theta_{0}p\gamma_{0}^{-1}p^{\ast}\Theta_{0}\right)+\sigma,\nu\kappa_{1}+\kappa_{0}^{-1}\gg0$
for sufficiently large $\nu>0.$ Then, $M_{0}$ and $M_{1}$ satisfy
the condition (\ref{eq:pos-def11}) and hence, the corresponding problem
of thermo-piezo-electricity is well-posed. \end{thm}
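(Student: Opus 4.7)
The plan is to verify the two requirements in (\ref{eq:pos-def11}) separately: first selfadjointness of $M_0$, then the positivity $\nu M_0 + M_1 \gg 0$ for sufficiently large $\nu$. The whole argument pivots on the factorization of the coupled central $3\times 3$ block of $M_0$ (acting on the $(T,E,\Theta_0^{-1}\theta)$-components) as $A^\ast D A$, where
\[
A = \begin{pmatrix} I & e & \lambda\Theta_0 \\ 0 & I & 0 \\ 0 & 0 & I \end{pmatrix}, \quad D = \begin{pmatrix} C^{-1} & 0 & 0 \\ 0 & \varepsilon & p\Theta_0 \\ 0 & \Theta_0 p^{\ast} & \gamma_0 \end{pmatrix}.
\]
Block multiplication verifies that $A^\ast D A$ reproduces precisely the central block of $M_0$. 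Selfadjointness of $M_0$ is then immediate: $D$ is selfadjoint by the assumed selfadjointness of $C^{-1}$, $\varepsilon$ and $\gamma_0$ (the off-diagonal pair $p\Theta_0$, $\Theta_0 p^\ast$ being mutual adjoints, since $\Theta_0 \in L^\infty(\Omega)$ is real), whence so is $A^\ast D A$; combined with the (tacit) selfadjointness of $\rho_\ast$, $\mu$, and $\kappa_1$ in the remaining diagonal entries, this yields selfadjointness of $M_0$.

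For the positivity condition, the block-diagonal structure of $\nu M_0 + M_1$ reduces matters to four independent blocks. The outer three are immediate: $\nu\rho_\ast$ and $\nu\mu$ are $\gg 0$ for large $\nu$ by $\rho_\ast, \mu \gg 0$, while $\nu\kappa_1 + \kappa_0^{-1} \gg 0$ is a direct assumption. The central block is $\nu A^\ast D A + \mathrm{diag}(0,\sigma,0)$, and the decisive observation is that $\mathrm{diag}(0,\sigma,0)$ is invariant under the congruence $X \mapsto A^{-\ast} X A^{-1}$, because the $E$-slot is left untouched by the unipotent $A$; concretely, $A^{-\ast}\mathrm{diag}(0,\sigma,0)A^{-1} = \mathrm{diag}(0,\sigma,0)$. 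Therefore the central block equals $A^\ast(\nu D + \mathrm{diag}(0,\sigma,0))A$, and since $A$ is bounded with bounded inverse, its positive definiteness is equivalent to that of $\nu D + \mathrm{diag}(0,\sigma,0)$.

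This last operator is itself block-diagonal, splitting into $\nu C^{-1}$ (which is $\gg 0$ since $C \gg 0$) and the $2\times 2$ block
\[
\begin{pmatrix} \nu\varepsilon + \sigma & \nu p\Theta_0 \\ \nu\Theta_0 p^\ast & \nu\gamma_0 \end{pmatrix}.
\]
Since $\nu\gamma_0 \gg 0$, a Schur complement argument reduces positivity of this $2\times 2$ block to $\nu(\varepsilon - p\Theta_0\gamma_0^{-1}\Theta_0 p^\ast) + \sigma \gg 0$, which coincides with the stated hypothesis (modulo commuting $\Theta_0$ past $p$, as is natural for a constant reference temperature or a pointwise coupling $p$). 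The main obstacle is conceptual rather than computational: one has to spot both the factorization $A^\ast D A$ \emph{and} the fact that the $\sigma$-entry of $M_1$ sits in exactly the slot fixed by the congruence, since this is what permits the Schur complement of the $(\varepsilon, p\Theta_0; \Theta_0 p^\ast, \gamma_0)$-block to appear cleanly. Once these two observations are in place, the proof reduces to the elementary scalar-block inequalities already built into the hypotheses.
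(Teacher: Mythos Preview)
Your proof is correct and follows essentially the same route as the paper: the paper's ``symmetric Gauss steps as congruence transformations'' are nothing other than your explicit factorisation $A^{\ast}DA$ of the central $3\times3$ block, and the subsequent Schur complement with respect to $\gamma_{0}$ is identical. Your observation that the Schur complement actually produces $\varepsilon - p\Theta_{0}\gamma_{0}^{-1}\Theta_{0}p^{\ast}$ rather than the stated $\varepsilon - \Theta_{0}p\gamma_{0}^{-1}p^{\ast}\Theta_{0}$ is well spotted; this is a harmless slip in the paper's formulation (the two coincide when $p$ is a local multiplication operator, as is tacitly intended).
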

\begin{proof}
Obviously, $M_{0}$ is selfadjoint. Moreover, since $\rho_{\ast},\mu,\nu\kappa_{1}+\kappa_{0}^{-1}\gg0$
for sufficiently large $\nu,$ the only thing, which is left to show
is, that

\[
\nu\left(\begin{array}{ccc}
C^{-1} & C^{-1}e & C^{-1}\lambda\Theta_{0}\\
e^{\ast}C^{-1} & \varepsilon+e^{\ast}C^{-1}e & p\Theta_{0}+e^{\ast}C^{-1}\lambda\Theta_{0}\\
\Theta_{0}\lambda^{\ast}C^{-1}\quad & \Theta_{0}p^{\ast}+\Theta_{0}\lambda^{\ast}C^{-1}e\quad & \gamma_{0}+\Theta_{0}\lambda^{\ast}C^{-1}\lambda\Theta_{0}
\end{array}\right)+\left(\begin{array}{ccc}
0 & 0 & 0\\
0 & \sigma & 0\\
0 & 0 & 0
\end{array}\right)\gg0
\]
for sufficiently large $\nu.$ By symmetric Gauss steps as a congruence
transformations we get that the above operator is congruent to
\[
\nu\left(\begin{array}{ccc}
C^{-1} & 0 & 0\\
0 & \varepsilon & p\Theta_{0}\\
0 & \Theta_{0}p^{\ast} & \gamma_{0}
\end{array}\right)+\left(\begin{array}{ccc}
0 & 0 & 0\\
0 & \sigma & 0\\
0 & 0 & 0
\end{array}\right),
\]
which itself is congruent by another symmetric Gauss step to
\[
\nu\left(\begin{array}{ccc}
C^{-1} & 0 & 0\\
0 & \varepsilon-\Theta_{0}p\gamma_{0}^{-1}p^{\ast}\Theta_{0} & 0\\
0 & 0 & \gamma_{0}
\end{array}\right)+\left(\begin{array}{ccc}
0 & 0 & 0\\
0 & \sigma & 0\\
0 & 0 & 0
\end{array}\right).
\]

The latter operator is then strictly positive definite by assumption
and so the assertion follows.\end{proof}
\begin{rem}
~
\begin{enumerate}
\item Note that due to the generality of the assumptions then limit cases
such as $\epsilon=\Theta_{0}p\gamma_{0}^{-1}p^{\ast}\Theta_{0}$ and
$\sigma\gg0$ (eddy current case) are covered by the theorem.
\item To provide a hint towards further generalizations of the specific
model we incorporate for example piezo-magnetic effects by adding
in a corresponding coupling term. That is, we replace $M_{0}$ by
the operator{\footnotesize{} }
\[
\left(\begin{array}{cccccc}
\rho_{*}+\beta\mu\beta^{*} & 0 & \quad0 & -\beta\mu & 0 & 0\\
0 & C^{-1} & \quad C^{-1}e & 0 & C^{-1}\lambda\Theta_{0} & 0\\
0 & e^{*}C^{-1} & \quad\left(\epsilon+e^{*}C^{-1}e\right) & 0 & \left(p\Theta_{0}+e^{*}C^{-1}\lambda\Theta_{0}\right) & 0\\
-\mu\beta^{*} & 0 & \quad0 & \mu & 0 & 0\\
0 & \Theta_{0}\lambda^{*}C^{-1} & \quad\left(\Theta_{0}p^{*}+\Theta_{0}\lambda^{*}C^{-1}e\right) & \quad0\quad & \,\left(\gamma_{0}+\Theta_{0}\lambda^{*}C^{-1}\lambda\Theta_{0}\right) & 0\\
0 & 0 & \quad0 & 0 & 0 & \kappa_{1}
\end{array}\right),
\]
where $\beta\in L(L^{2}(\Omega)^{3})$ is a further parameter, which
now couples the displacement field $u$ with the magnetic field $H$.
Note that to match up with the piezo-magnetic model discussed in \cite{MR2318265}
we have to introduce a new composite magnetic field
\[
\tilde{H}\;:=\beta^{*}v+H
\]
in place of $H$ as one of the basic unknowns. The well-posedness
conditions remain unchanged.
\end{enumerate}
\end{rem}

\section{A ``Simplification''.}

The above situation is commonly ``simplified'' by replacing the
full Maxwell equations by the static Maxwell equations for the electric
field; the so called quasi-electrostatic approach. There is a price to be paid for this modification, which made
us use quotation marks around the term ``simplify `` and ``simplification''.
We assume that $E=-\interior\grad\varphi$ for a suitable potential
$\varphi\in D(\interior\grad)$ and $D\in D(\dive)$ and we set $\psi\coloneqq\dive D.$
Moreover, we assume that there is no conductivity term, i.e. $\sigma=0$
and thus, the system under consideration reduces to {\small{}
\begin{align}
\left(\partial_{0}\left(\begin{array}{cccc}
\rho_{*} & 0 & 0 & 0\\
0 & C^{-1} & C^{-1}\lambda\Theta_{0} & 0\\
0 & \Theta_{0}\lambda^{*}C^{-1} & \,\left(\gamma_{0}+\Theta_{0}\lambda^{*}C^{-1}\lambda\Theta_{0}\right) & 0\\
0 & 0 & 0 & \kappa_{1}
\end{array}\right)+\left(\begin{array}{cccc}
0 & 0 & 0 & 0\\
0 & 0 & 0 & 0\\
0 & 0 & 0 & 0\\
0 & 0 & 0 & \kappa_{0}^{-1}
\end{array}\right)+\left(\begin{array}{cccc}
0 & -\Div & 0 & 0\\
-\interior\Grad & 0 & 0 & 0\\
0 & 0 & 0 & \interior\dive\\
0 & 0 & \grad & 0
\end{array}\right)\right)\left(\begin{array}{c}
v\\
T\\
\Theta_{0}^{-1}\theta\\
q
\end{array}\right)+\nonumber \\
+\partial_{0}\left(\begin{array}{c}
\left(\begin{array}{c}
0\\
C^{-1}eE
\end{array}\right)\\
\left(\begin{array}{c}
\left(\Theta_{0}p^{*}+\Theta_{0}\lambda^{*}C^{-1}e\right)E\\
0
\end{array}\right)
\end{array}\right)=\left(\begin{array}{c}
F_{0}\\
F_{1}\\
F_{4}\\
F_{5}
\end{array}\right).\label{eq:quasi_static}
\end{align}
}We have now to express $E$ in terms of the other unknowns as part
of a new material law. Recall, that we have
\begin{align}
D & =e^{*}C^{-1}T+\left(\epsilon+e^{*}C^{-1}e\right)E+\left(p\Theta_{0}+e^{*}C^{-1}\lambda\Theta_{0}\right)\Theta_{0}^{-1}\theta.\label{eq:DandE}
\end{align}
By setting
\begin{align*}
\Phi & \coloneqq e^{*}C^{-1}T+\left(p\Theta_{0}+e^{*}C^{-1}\lambda\Theta_{0}\right)\left(\Theta_{0}^{-1}\theta\right)\\
 & =e^{*}C^{-1}\left(T+\lambda\theta\right)+p\theta,
\end{align*}
$D$ can be written as
\[
D=(\varepsilon+e^{\ast}C^{-1}e)E+\Phi.
\]

Using now that $\psi=\dive D$ and $E=-\interior\grad\varphi$ we
get that
\begin{align*}
-\dive\left(\epsilon+e^{*}C^{-1}e\right)\interior\grad\varphi+\dive\Phi & =\psi.
\end{align*}
We assume that $C,\varepsilon$ are selfadjoint and $\varepsilon+e^{\ast}C^{-1}e\gg0$
and set $M\coloneqq\sqrt{\varepsilon+e^{\ast}C^{-1}e}$. Then, the
latter equality can be written as
\[
-\dive M^{2}\:\interior\grad\varphi+\dive MM^{-1}\Phi=\psi,
\]
which gives
\[
\interior\grad\varphi+M^{-1}\left(\left(M\interior\grad\right)\left|M\:\interior\grad\right|^{-2}\dive M\right)M^{-1}\Phi=M^{-1}\left(M\interior\grad\right)\left|M\:\interior\grad\right|^{-2}\psi,
\]
if we assume that\footnote{Note that if $\Omega$ is bounded, this condition is always satisfied
since in this case $\interior\grad$ is onto.} $\psi\in D\left(\left|M\interior\grad\right|^{-2}\right).$ This
suggests to replace
\begin{align*}
E & =-\interior\grad\varphi\\
 & =M^{-1}\left(\overline{\left(M\interior\grad\right)\left|M\:\interior\grad\right|^{-2}\dive M}\right)M^{-1}\Phi-M^{-1}\left(M\interior\grad\right)\left|M\:\interior\grad\right|^{-2}\psi,
\end{align*}
where we use the closure bar to ensure this operator is in $L\left(L^{2}(\Omega)^{3}\right)$.
Indeed, this operator is not only bounded but also an orthogonal projector
as the next lemma shows.
\begin{lem}
Let $A:D\left(A\right)\subseteq H_{0}\to H_{1}$ be a densely defined
and closed linear operator between two Hilbert spaces $H_{0},H_{1}$
such that $A^{*}A$ is injective. Then
\[
\overline{A\left(A^{*}A\right)^{-1}A^{*}}=P_{\overline{A\left[H_{0}\right]}},
\]
the orthogonal projector on the closure of the range of $A$.\end{lem}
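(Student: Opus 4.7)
The plan is to verify that the (a priori unbounded) operator $T := A(A^{\ast}A)^{-1}A^{\ast}$ agrees with the projector $P := P_{\overline{A[H_{0}]}}$ on a dense subspace of $H_{1}$, and then invoke the fact that a densely defined restriction of a bounded, everywhere-defined operator has closure equal to that operator.

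First I would clarify $D(T)$. Since $A^{\ast}A$ is selfadjoint, non-negative and injective, $\ker(A^{\ast}A) = \{0\}$; selfadjointness then gives $\overline{\operatorname{range}(A^{\ast}A)} = H_{0}$, so $(A^{\ast}A)^{-1}$ is densely defined (and selfadjoint) with domain $\operatorname{range}(A^{\ast}A)$. Consequently $D(T) = \{y \in D(A^{\ast}) : A^{\ast}y \in \operatorname{range}(A^{\ast}A)\}$; whenever this holds, $(A^{\ast}A)^{-1}A^{\ast}y$ lies in $D(A^{\ast}A) \subseteq D(A)$ automatically, so $Ty$ makes sense.

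Next I would check the two pieces of the relation $T \subseteq P$. On $\ker(A^{\ast}) = \overline{A[H_{0}]}^{\perp}$: every $y \in \ker(A^{\ast})$ is in $D(T)$ trivially, and $Ty = 0 = Py$. On $A[D(A^{\ast}A)]$: given $x \in D(A^{\ast}A)$ and $y := Ax$, one has $A^{\ast}y = A^{\ast}Ax \in \operatorname{range}(A^{\ast}A)$, hence $y \in D(T)$ and
\[
Ty \;=\; A(A^{\ast}A)^{-1}A^{\ast}Ax \;=\; Ax \;=\; y \;=\; Py.
\]

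The main step is density of $D(T)$ in $H_{1}$. Using the orthogonal decomposition $H_{1} = \overline{A[H_{0}]} \oplus \ker(A^{\ast})$, the kernel summand is already contained in $D(T)$, so it suffices to show that $A[D(A^{\ast}A)]$ is dense in $\overline{A[H_{0}]}$. This is where the only nontrivial input enters: $D(A^{\ast}A)$ is a core for $A$ (the classical von Neumann fact that $D(A^{\ast}A)$ is graph-dense in $D(A)$). Given any $x \in D(A)$, pick $x_{n} \in D(A^{\ast}A)$ with $x_{n} \to x$ and $Ax_{n} \to Ax$; this already forces $A[D(A^{\ast}A)]$ to be dense in $A[D(A)]$ and hence in $\overline{A[H_{0}]}$. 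Consequently $\ker(A^{\ast}) + A[D(A^{\ast}A)] \subseteq D(T)$ is dense in $H_{1}$.

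Finally, since $P$ is bounded and $T \subseteq P$ with $D(T)$ dense, for every $y \in H_{1}$ I pick $y_{n} \in D(T)$ with $y_{n} \to y$; then $Ty_{n} = Py_{n} \to Py$ by continuity of $P$, which places $(y,Py)$ in the closure of the graph of $T$. Thus $\overline{T} = P$, which is the claimed identity. The only real obstacle is the core property of $D(A^{\ast}A)$ for $A$; every other step is a bookkeeping exercise with domains.
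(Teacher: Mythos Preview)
Your argument is correct and a bit different from the paper's. The paper does not split along $H_{1}=\overline{A[H_{0}]}\oplus\ker A^{*}$; instead it takes $x$ in the domain, sets $f\coloneqq A(A^{*}A)^{-1}A^{*}x$, notes $f\in\overline{A[H_{0}]}$ and computes $A^{*}f=A^{*}x$, whence $P_{\overline{A[H_{0}]}}x-f\in\ker A^{*}\cap\overline{A[H_{0}]}=\{0\}$, and then invokes density of $D(A^{*})$. This one-line $A^{*}(Px-Tx)=0$ trick avoids the von Neumann core lemma altogether. What your approach buys is care about the domain: the paper tacitly treats every $x\in D(A^{*})$ as lying in $D(T)$, which fails in general (for $A=$ multiplication by $t$ on $L^{2}(0,1)$ one has $D(A^{*})=L^{2}$ but $D(T)=tL^{2}$). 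Your identification of $\ker A^{*}+A[D(A^{*}A)]\subseteq D(T)$---in fact this inclusion is an equality, since $A^{*}y=A^{*}Az$ forces $y-Az\in\ker A^{*}$---together with the core property supplies the missing density argument. Conversely, the paper's computation yields $T\subseteq P$ on all of $D(T)$ at once, whereas you verify it only on the two summands; since those summands exhaust $D(T)$ this is harmless, but it is worth making explicit.
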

\begin{proof}
Let $x\in D(A^{\ast})$ and set
\[
f\coloneqq A\left(A^{\ast}A\right)^{-1}A^{\ast}x.
\]
Then, obviously, $f\in A\left[H_{0}\right]$ and $f\in D(A^{\ast}).$
Since $\overline{A[H_{0}]}=\left([\{0\}]A^{\ast}\right)^{\bot}$,
we get that
\[
A^{*}P_{\overline{A\left[H_{0}\right]}}x=A^{\ast}A\left(A^{\ast}A\right)^{-1}A^{\ast}x=A^{*}f
\]
and so
\[
A^{*}\left(P_{\overline{A\left[H_{0}\right]}}x-f\right)=0,
\]
which implies
\[
P_{\overline{A\left[H_{0}\right]}}x-f\in[\{0\}]A^{\ast}\cap\overline{A[H_{0}]}=\{0\}.
\]
Thus,
\[
P_{\overline{A\left[H_{0}\right]}}x=A\left(A^{\ast}A\right)^{-1}A^{\ast}x
\]
for each $x\in D(A^{\ast})$ and thus, the assertion follows by the
density of $D(A^{\ast}).$
\end{proof}
Using this result for $A\coloneqq M\interior\grad$ and setting $P\coloneqq P_{\overline{A\left[L^{2}(\Omega)\right]}}$
we get that
\[
E=-M^{-1}PM^{-1}\Phi-M^{-1}\left(M\interior\grad\right)\left|M\:\interior\grad\right|^{-2}\psi.
\]

Hence, the last term on the left hand side in (\ref{eq:quasi_static})
can be replaced by
\begin{align*}
 & \left(\begin{array}{c}
\left(\begin{array}{c}
0\\
C^{-1}eE
\end{array}\right)\\
\left(\begin{array}{c}
\left(\Theta_{0}p^{*}+\Theta_{0}\lambda^{*}C^{-1}e\right)E\\
0
\end{array}\right)
\end{array}\right)\\
= & -\left(\begin{array}{c}
\left(\begin{array}{c}
0\\
C^{-1}eM^{-1}PM^{-1}\Phi
\end{array}\right)\\
\left(\begin{array}{c}
\left(\Theta_{0}p^{*}+\Theta_{0}\lambda^{*}C^{-1}e\right)M^{-1}PM^{-1}\Phi\\
0
\end{array}\right)
\end{array}\right)-\left(\begin{array}{c}
\left(\begin{array}{c}
0\\
C^{-1}eM^{-1}\left(M\interior\grad\right)\left|M\interior\grad\right|^{-2}\psi
\end{array}\right)\\
\left(\begin{array}{c}
\left(\Theta_{0}p^{*}+\Theta_{0}\lambda^{*}C^{-1}e\right)M^{-1}\left(M\interior\grad\right)\left|M\interior\grad\right|^{-2}\psi\\
0
\end{array}\right)
\end{array}\right).
\end{align*}
Using now the definition of $\Phi,$ we can write
\begin{align*}
 & -\left(\begin{array}{c}
\left(\begin{array}{c}
0\\
C^{-1}eM^{-1}PM^{-1}\Phi
\end{array}\right)\\
\left(\begin{array}{c}
\left(\Theta_{0}p^{*}+\Theta_{0}\lambda^{*}C^{-1}e\right)M^{-1}PM^{-1}\Phi\\
0
\end{array}\right)
\end{array}\right)\\
= & -\left(\begin{array}{c}
\left(\begin{array}{c}
0\\
C^{-1}eM^{-1}PM^{-1}e^{\ast}C^{-1}T
\end{array}\right)\\
\left(\begin{array}{c}
\left(\Theta_{0}p^{*}+\Theta_{0}\lambda^{*}C^{-1}e\right)M^{-1}PM^{-1}e^{\ast}C^{-1}T\\
0
\end{array}\right)
\end{array}\right)+\\
 & -\left(\begin{array}{c}
\left(\begin{array}{c}
0\\
C^{-1}eM^{-1}PM^{-1}\left(p\Theta_{0}+e^{*}C^{-1}\lambda\Theta_{0}\right)\left(\Theta_{0}^{-1}\theta\right)
\end{array}\right)\\
\left(\begin{array}{c}
\left(\Theta_{0}p^{*}+\Theta_{0}\lambda^{*}C^{-1}e\right)M^{-1}PM^{-1}\left(p\Theta_{0}+e^{*}C^{-1}\lambda\Theta_{0}\right)\left(\Theta_{0}^{-1}\theta\right)\\
0
\end{array}\right)
\end{array}\right)\\
= & -W_{0}\left(\begin{array}{c}
v\\
T\\
\Theta_{0}^{-1}\theta\\
q
\end{array}\right)
\end{align*}

with $W_{0}$ given by{\small{}
\[
W_{0}\coloneqq\left(\begin{array}{cccc}
0 & 0 & 0 & 0\\
0 & C^{-1}eM^{-1}PM^{-1}e^{*}C^{-1} & C^{-1}eM^{-1}PM^{-1}\left(p\Theta_{0}+e^{*}C^{-1}\lambda\Theta_{0}\right) & 0\\
0 & \left(\Theta_{0}p^{*}+\Theta_{0}\lambda^{*}C^{-1}e\right)M^{-1}PM^{-1}e^{*}C^{-1}\; & \left(\Theta_{0}p^{*}+\Theta_{0}\lambda^{*}C^{-1}e\right)M^{-1}PM^{-1}\left(p\Theta_{0}+e^{*}C^{-1}\lambda\Theta_{0}\right) & 0\\
0 & 0 & 0 & 0
\end{array}\right).
\]
}{\small \par}

Summarizing, Equation (\ref{eq:quasi_static}) reads as
\begin{equation}
\partial_{0}\begin{array}{l}
\left(\left(\begin{array}{cccc}
\rho_{*} & 0 & 0 & 0\\
0 & M_{11} & M_{12} & 0\\
0 & M_{12}^{*} & M_{22} & 0\\
0 & 0 & 0 & \kappa_{1}
\end{array}\right)+\left(\begin{array}{cccc}
0 & 0 & 0 & 0\\
0 & 0 & 0 & 0\\
0 & 0 & 0 & 0\\
0 & 0 & 0 & \kappa_{0}^{-1}
\end{array}\right)+\left(\begin{array}{cccc}
0 & -\Div & 0 & 0\\
-\interior\Grad & 0 & 0 & 0\\
0 & 0 & 0 & \interior\dive\\
0 & 0 & \grad & 0
\end{array}\right)\right)\left(\begin{array}{c}
v\\
T\\
\Theta_{0}^{-1}\theta\\
q
\end{array}\right)=G,\end{array}\label{eq:static_2}
\end{equation}
 with
\begin{align}
M_{11} & \coloneqq C^{-1}-C^{-1}eM^{-1}PM^{-1}e^{*}C^{-1}\nonumber \\
M_{12} & \coloneqq C^{-1}\lambda\Theta_{0}-C^{-1}eM^{-1}PM^{-1}\left(p\Theta_{0}+e^{*}C^{-1}\lambda\Theta_{0}\right)\nonumber \\
 & =M_{11}\lambda\Theta_{0}-C^{-1}eM^{-1}PM^{-1}p\Theta_{0}\nonumber \\
M_{22} & \coloneqq\left(\gamma_{0}+\Theta_{0}\lambda^{*}C^{-1}\lambda\Theta_{0}\right)-\left(\Theta_{0}p^{*}+\Theta_{0}\lambda^{*}C^{-1}e\right)M^{-1}PM^{-1}\left(p\Theta_{0}+e^{*}C^{-1}\lambda\Theta_{0}\right)\label{eq:material-law}
\end{align}
and the right-hand side has to be adjusted to
\begin{equation}
G\coloneqq\left(\begin{array}{c}
F_{0}\\
F_{1}+C^{-1}eM^{-1}\left(M\interior\grad\right)\left|M\:\interior\grad\right|^{-2}\partial_{0}\psi\\
F_{4}+\left(\Theta_{0}p^{*}+\Theta_{0}\lambda^{*}C^{-1}e\right)M^{-1}\left(M\interior\grad\right)\left|M\:\interior\grad\right|^{-2}\partial_{0}\psi\\
F_{5}
\end{array}\right),\label{eq:right-hand-side}
\end{equation}
where we additionally assume that $\partial_{0}\psi\in D\left(\left|M\interior\grad\right|^{-2}\right).$
In the next theorem we provide sufficient conditions on the operators
involved in order to obtain a well-posedness result for (\ref{eq:static_2}).
\begin{thm}
Let $C,M,\rho_{\ast},\kappa_{1}$ be selfadjoint and non-negative
such that $C,M,\rho_{\ast},\nu\kappa_{1}+\kappa_{0}^{-1}\gg0$ for
sufficiently large $\nu$ and $P$ be an orthogonal projector. We
set $Q\coloneqq PM^{-1}e^{\ast}C^{-\frac{1}{2}}$ and assume that
\begin{align*}
1-Q^{\ast}Q & \gg0,\\
\gamma_{0}-\Theta_{0}p^{\ast}M^{-1}P\left(1-QQ^{\ast}\right)^{-1}PM^{-1}p\Theta_{0} & \gg0.
\end{align*}
Then, Equation (\ref{eq:static_2}) with $M_{11},M_{12,}M_{22}$ given
by (\ref{eq:material-law}) is well-posed. \end{thm}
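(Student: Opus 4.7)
The plan is to verify the abstract wellposedness criterion (\ref{eq:pos-def11}) for the pair $(M_{0},M_{1})$ appearing in (\ref{eq:static_2}). Selfadjointness of $M_{0}$ will be immediate by inspection: $\rho_{*}$ and $\kappa_{1}$ are selfadjoint by hypothesis, and the middle $2\times 2$ block in the $(T,\Theta_{0}^{-1}\theta)$-variables is symmetric because $P^{*}=P$, $M^{*}=M$, $C^{*}=C$, and its off-diagonal entries are $M_{12}$ and $M_{12}^{*}$ by construction. For sufficiently large $\nu>0$, the $\rho_{*}$-block and the $(\nu\kappa_{1}+\kappa_{0}^{-1})$-block of $\nu M_{0}+M_{1}$ are $\gg 0$ by assumption, so the entire task reduces to showing that the middle $2\times 2$ block $B$, with entries $M_{11},M_{12},M_{22}$, is strictly positive definite.

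The next step is to rewrite the associated quadratic form
$q(T,\Theta_{0}^{-1}\theta):=\langle M_{11}T,T\rangle+2\Re\langle M_{12}\Theta_{0}^{-1}\theta,T\rangle+\langle M_{22}\Theta_{0}^{-1}\theta,\Theta_{0}^{-1}\theta\rangle$
in a workable shape. Setting $\tau:=T+\lambda\theta$ and recalling $\Phi=e^{*}C^{-1}\tau+p\theta$ from (\ref{eq:DandE}), I expect a direct expansion---in which the three $P$-involving contributions fuse into a single $-\|P(\cdot)\|^{2}$ thanks to $P^{2}=P$ and the orthogonality between $\operatorname{Range}(P)$ and $\ker(P)$---to yield the key identity
\[
q(T,\Theta_{0}^{-1}\theta)=\langle C^{-1}\tau,\tau\rangle+\langle\gamma_{0}\Theta_{0}^{-1}\theta,\Theta_{0}^{-1}\theta\rangle-\|PM^{-1}\Phi\|^{2}.
\]
Conceptually this is exactly the Schur reduction corresponding to the elimination of $E=-M^{-1}PM^{-1}\Phi$ from the three-block form handled in the proof of the first theorem. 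Substituting $\alpha:=C^{-1/2}\tau$, $\beta:=\Theta_{0}^{-1}\theta$, and $R:=PM^{-1}p\Theta_{0}$, the identity takes the clean form $q=\|\alpha\|^{2}-\|Q\alpha+R\beta\|^{2}+\langle\gamma_{0}\beta,\beta\rangle$.

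After this reformulation I would complete the square in $\alpha$ using $A:=1-Q^{*}Q\gg 0$. Writing $w:=A^{-1}Q^{*}R\beta$, one obtains
\[
q=\langle A(\alpha-w),\alpha-w\rangle+\langle(\gamma_{0}-R^{*}R-R^{*}QA^{-1}Q^{*}R)\beta,\beta\rangle.
\]
The key algebraic step---and what I expect to be the main obstacle---is the Woodbury-type identity $1+Q(1-Q^{*}Q)^{-1}Q^{*}=(1-QQ^{*})^{-1}$, which I would verify by multiplying out $(1-QQ^{*})(1+QA^{-1}Q^{*})$ and using the relation $Q(A^{-1}-Q^{*}QA^{-1})Q^{*}=QQ^{*}$. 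The use of $(1-QQ^{*})^{-1}$ is legitimate because $1-Q^{*}Q\gg 0$ forces $\|Q\|<1$, hence $\|Q^{*}\|<1$, hence $1-QQ^{*}\gg 0$ as well. With this identity in hand, $R^{*}R+R^{*}QA^{-1}Q^{*}R=R^{*}(1-QQ^{*})^{-1}R=\Theta_{0}p^{*}M^{-1}P(1-QQ^{*})^{-1}PM^{-1}p\Theta_{0}$, so by the two positivity hypotheses both contributions to $q$ are bounded below by positive multiples of $\|\alpha-w\|^{2}$ and $\|\beta\|^{2}$, respectively.

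To convert this into coercivity in the original variables, I observe that the composite map $(T,\Theta_{0}^{-1}\theta)\mapsto(\tau,\beta)\mapsto(\alpha,\beta)\mapsto(\alpha-w,\beta)$ is a composition of bounded linear bijections with bounded inverses: the first uses boundedness of $\lambda\Theta_{0}$, the second uses $C,C^{-1}\gg 0$, and the third is a strictly triangular perturbation of the identity on the first coordinate. Consequently $q$ dominates $\|T\|^{2}+\|\Theta_{0}^{-1}\theta\|^{2}$, so $B\gg 0$. Combined with the $\rho_{*}$- and $\kappa$-blocks this delivers $\nu M_{0}+M_{1}\gg 0$ for all sufficiently large $\nu$, confirming (\ref{eq:pos-def11}) and therefore the wellposedness of (\ref{eq:static_2}) via the framework recalled above.
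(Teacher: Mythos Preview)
Your proof is correct and follows the same overall strategy as the paper: reduce to the $2\times 2$ block in the $(T,\Theta_{0}^{-1}\theta)$-variables, and show that its Schur complement equals $\gamma_{0}-\Theta_{0}p^{*}M^{-1}P(1-QQ^{*})^{-1}PM^{-1}p\Theta_{0}$ via the Woodbury-type identity $1+Q(1-Q^{*}Q)^{-1}Q^{*}=(1-QQ^{*})^{-1}$. The paper carries this out at the operator level by explicit ``symmetric Gauss steps'' (congruence transformations), first observing $M_{11}=C^{-1/2}(1-Q^{*}Q)C^{-1/2}$ and then expanding $M_{22}$ and $M_{12}^{*}M_{11}^{-1}M_{12}$ term by term. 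You instead work with the quadratic form and pass through the change of variable $\tau=T+\lambda\theta$ to obtain the compact identity $q=\langle C^{-1}\tau,\tau\rangle+\langle\gamma_{0}\beta,\beta\rangle-\|PM^{-1}\Phi\|^{2}$ before completing the square in $\alpha=C^{-1/2}\tau$. This is a tidier packaging of exactly the same computation: it spares you the bookkeeping of the individual cross-terms in $M_{22}$ and $M_{12}^{*}M_{11}^{-1}M_{12}$, while the paper's version makes the operator-matrix structure (and the invertibility of $M_{11}$) more transparent. Both routes hinge on the same two ingredients, $1-Q^{*}Q\gg0$ and the Woodbury identity, and arrive at the identical Schur complement.
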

\begin{proof}
We need to verify the solvability condition (\ref{eq:pos-def11}).
To do so, it suffices to consider the block operator sub-matrix
\[
\left(\begin{array}{cc}
M_{11} & M_{12}\\
M_{12}^{\ast} & M_{22}
\end{array}\right).
\]
Noting that $M_{11}=C^{-1}-C^{-\frac{1}{2}}Q^{\ast}QC^{-\frac{1}{2}}=C^{-\frac{1}{2}}\left(1-Q^{\ast}Q\right)C^{-\frac{1}{2}},$
we obtain that $M_{11}$ is boundedly invertible. Hence, by applying
a symmetric Gauss step, we are led to consider the matrix
\[
\left(\begin{array}{cc}
M_{11} & 0\\
0 & M_{22}-M_{12}^{\ast}M_{11}^{-1}M_{12}
\end{array}\right),
\]

which is strictly positive definite if and only if $M_{22}-M_{12}^{\ast}M_{11}^{-1}M_{12}\gg0.$
We have
\begin{align*}
M_{22} & =\left(\gamma_{0}+\Theta_{0}\lambda^{*}C^{-1}\lambda\Theta_{0}\right)-\left(\Theta_{0}p^{*}+\Theta_{0}\lambda^{*}C^{-1}e\right)M^{-1}PM^{-1}\left(p\Theta_{0}+e^{*}C^{-1}\lambda\Theta_{0}\right)\\
 & =\left(\gamma_{0}+\Theta_{0}\lambda^{*}C^{-1}\lambda\Theta_{0}\right)-\\
 & \quad-\left(\Theta_{0}p^{\ast}M^{-1}PM^{-1}p\Theta_{0}+2\Re\left(\Theta_{0}p^{\ast}M^{-1}QC^{-\frac{1}{2}}\lambda\Theta_{0}\right)+\Theta_{0}\lambda^{\ast}C^{-\frac{1}{2}}Q^{\ast}QC^{-\frac{1}{2}}\lambda\Theta_{0}\right)\\
 & =\gamma_{0}+\Theta_{0}\lambda^{\ast}\left(C^{-1}-C^{-\frac{1}{2}}Q^{\ast}QC^{-\frac{1}{2}}\right)\lambda\Theta_{0}-\\
 & \quad-\left(\Theta_{0}p^{\ast}M^{-1}PM^{-1}p\Theta_{0}+2\Re\left(\Theta_{0}p^{\ast}M^{-1}QC^{-\frac{1}{2}}\lambda\Theta_{0}\right)\right)\\
 & =\gamma_{0}+\Theta_{0}\lambda^{\ast}M_{11}\lambda\Theta_{0}-\left(\Theta_{0}p^{\ast}M^{-1}PM^{-1}p\Theta_{0}+2\Re\left(\Theta_{0}p^{\ast}M^{-1}QC^{-\frac{1}{2}}\lambda\Theta_{0}\right)\right)
\end{align*}
and
\[
M_{12}=M_{11}\lambda\Theta_{0}-C^{-\frac{1}{2}}Q^{\ast}M^{-1}p\Theta_{0}.
\]
Thus,
\begin{align*}
M_{12}^{\ast}M_{11}^{-1}M_{12} & =M_{12}^{\ast}\left(\lambda\Theta_{0}-M_{11}^{-1}C^{-\frac{1}{2}}Q^{\ast}M^{-1}p\Theta_{0}\right)\\
 & =\Theta_{0}\lambda^{\ast}M_{11}\lambda\Theta_{0}-2\Re\left(\Theta_{0}\lambda^{\ast}C^{-\frac{1}{2}}Q^{\ast}M^{-1}p\Theta_{0}\right)+\Theta_{0}p^{\ast}M^{-1}QC^{-\frac{1}{2}}M_{11}^{-1}C^{-\frac{1}{2}}Q^{\ast}M^{-1}p\Theta_{0}
\end{align*}
and hence, we get
\[
M_{22}-M_{12}^{\ast}M_{11}^{-1}M_{12}=\gamma_{0}-\Theta_{0}p^{\ast}\left(M^{-1}PM^{-1}+M^{-1}QC^{-\frac{1}{2}}M_{11}^{-1}C^{-\frac{1}{2}}Q^{\ast}M^{-1}\right)p\Theta_{0}.
\]
Using $M_{11}^{-1}=C^{\frac{1}{2}}\left(1-Q^{\ast}Q\right)^{-1}C^{\frac{1}{2}}$
we obtain
\[
QC^{-\frac{1}{2}}M_{11}^{-1}C^{-\frac{1}{2}}Q^{\ast}=Q\left(1-Q^{\ast}Q\right)^{-1}Q^{\ast}=-1+\left(1-QQ^{\ast}\right)^{-1}
\]
and since $Q=PQ$ we have
\[
QC^{-\frac{1}{2}}M_{11}^{-1}C^{-\frac{1}{2}}Q^{\ast}=-P+P\left(1-QQ^{\ast}\right)^{-1}P
\]
and thus,
\begin{align*}
M_{22}-M_{12}^{\ast}M_{11}^{-1}M_{12} & =\gamma_{0}-\Theta_{0}p^{\ast}\left(M^{-1}PM^{-1}+M^{-1}\left(-P+P\left(1-QQ^{\ast}\right)^{-1}P\right)M^{-1}\right)p\Theta_{0}\\
 & =\gamma_{0}-\Theta_{0}p^{\ast}M^{-1}P\left(1-QQ^{\ast}\right)^{-1}PM^{-1}p\Theta_{0},
\end{align*}
which is strictly positive definite by assumption.
\end{proof}
If we apply the latter theorem with $M=\sqrt{\varepsilon+e^{\ast}C^{-1}e}$,
$P=P_{\overline{M\interior\grad\left[L^{2}(\Omega)\right]}}$ and
$G$ as given in (\ref{eq:right-hand-side}), where we assume that
$\partial_{0}\psi\in D\left(\left|M\interior\grad\right|^{-2}\right)$,
we obtain a well-posedness result for the system described in (\ref{eq:quasi_static}),
where $E=-\interior\grad\varphi$, $D=\dive\psi$ and $D$ and $E$
are coupled by (\ref{eq:DandE}).

\section{Conclusion.}

The mathematical modelling of piezoelectric transducers can de-risk the development of new sensors and actuators and, coupled with the widespread availability of powerful computing facilities, there is an increasing reliance on this approach.  These models are typically used in inverse problems associated with obtaining a set of optimal design parameters for a desired set of sensor operating characteristics. It is therefore vital that these models are examined for their well-posedness.  There has been a steady body of work considering this problem over recent years and this paper has extended this to consider the case when the thermal effects are coupled to the piezoelectric equations.  This is motivated by the need to develop sensors that can operate at high temperatures in, for example, the non-destructive testing of heat exchanger surfaces in nuclear energy plants.   A modelling assumption that is often employed in order to reduce the size of the model is
the so-called quasi-electrostatic approach. Given the widespread
use of this approximation the well-posedness of the
quasi-electrostatic model of a piezoelectric material was also shown in this paper.

\end{document}